\documentclass[12pt,a4paper]{amsart}
\usepackage{graphics}
\usepackage{epsfig}
\usepackage{graphicx}
\theoremstyle{plain}
\usepackage{amssymb}
\usepackage[ukrainian,english]{babel}
\usepackage[usenames]{color}
\usepackage{colortbl}
\advance\hoffset-20mm \advance\textwidth40mm

\newtheorem{theorem}{Theorem}
\newtheorem{lemma}{Lemma}
\newtheorem*{theo*}{Theorem}

\theoremstyle{definition}

\newtheorem*{definition*}{Definition}

% Makros
% Makros

\DeclareMathOperator{\ad}{ad}

\newcommand{\p}{\partial}

\begin{document}
\sloppy
\title[On maximality of some solvable and locally nilpotent ...]
%of vector fields in four variables]
{On maximality of some solvable and locally \\ nilpotent   subalgebras of the Lie algebra $W_n(K)$}
\author
{D.Efimov, M.Sydorov, K.Sysak}
\address{D.Efimov: Department of Algebra and Computer Mathematics, Faculty of Mechanics and Mathematics,
Taras Shevchenko National University of Kyiv, 64, Volodymyrska street, 01033  Kyiv, Ukraine
}
\email{danil.efimov@yahoo.com}
\address{M.Sydorov:
Department of Algebra and Computer Mathematics, Faculty of Mechanics and Mathematics,
Taras Shevchenko National University of Kyiv, 64, Volodymyrska street, 01033  Kyiv, Ukraine}
\email{smsidorov95@gmail.com}
\address{K.Sysak:
 Department of Higher and Applied Mathematics, Education and Research Institute of Energetics, Automatics and Energy saving, National University of Life and Environmental Sciences of Ukraine, 15, Heroiv Oborony street, 03041  Kyiv, Ukraine}
\email{sysakkya@gmail.com}
\date{\today}
\keywords{Lie algebra,  derivation, locally nilpotent, solvable, maximal subalgebra }
\subjclass[2000]{Primary 17B66, 17B05; Secondary 17B40}

% The second author was partially supported by the DFFD,
%grant F28.1/026
%
\begin{abstract}

%\textcolor{red}
{Let $K$ be an algebraically closed field of characteristic zero,  $P_n=K[x_1,\ldots ,x_n]$  the polynomial ring, and  $W_n(K)$  the Lie algebra of all $K$-derivations on $P_n$.   One of the most important subalgebras of $W_n(K)$ is the triangular subalgebra $u_n(K) = P_0\partial_1+\cdots+P_{n-1}\partial_n$, where $\partial_i:=\partial/\partial x_i$ are partial derivatives on $P_n$. This subalgebra consists of locally nilpotent derivations on $P_n.$ Such derivations  define automorphisms of the ring $P_n$ and were studied by many authors. The  subalgebra $u_n(K) $ is contained in another interesting subalgebra $s_n(K)=(P_0+x_1P_0)\partial_1+\cdots +(P_{n-1}+x_nP_{n-1})\partial_n,$ which  is solvable of the derived length $ 2n$ that is the maximum derived length of solvable subalgebras of $W_n(K).$
It is proved that $u_n(K)$  is a maximal locally nilpotent subalgebra and $s_n(K)$ is a maximal solvable subalgebra of the Lie algebra $W_n(K)$.}
 \end{abstract}
\maketitle

\section{Introduction}

Let $K$ be an algebraically closed field of characteristic zero and $P_n=K[x_1,\ldots ,x_n]$  the polynomial ring in $n$ variables. Recall that a $K$-linear map $D: P_n \to  P_n$ is called a $K$-derivation (or simply  a derivation if the field $K$ is fixed) if it satisfies the Leibniz rule: $D(fg)=D(f)g+fD(g)$ for any $f, g \in P_n$. For any $f_1,\ldots ,f_n \in P_n$ there exists a unique $K$-derivation ${D} \in W_n(K)$ of the form ${D} = f_1\partial_1+\cdots +f_n\partial_n$ such that ${D}(x_i)=f_i, i=1,\ldots ,n,$ where $\partial_i:=\partial/\partial x_i$ are partial derivatives on $P_n$. The vector space $W_n(K)$ of all $K$-derivations on $P_n$ is a Lie algebra over the field $K$ with respect to the Lie bracket $[{D}_1, {D}_2]={D}_1{D}_2-{D}_2{D}_1$, ${D}_1, {D}_2 \in W_n(K)$. This Lie algebra is of great interest in many areas of mathematics and physics because in geometric language any derivation can be considered as a vector field on $K^n$ with polynomial coefficients.

Nilpotent, locally nilpotent and solvable subalgebras of $W_n(K)$ were studied by many authors, started  from~\cite{Lie1} (see, for example,~\cite{Bavula, Olver2, MP1}).
One of the most important subalgebras of $W_n(K)$ is the triangular Lie algebra $$u_n(K)=P_0\partial_1+\cdots +P_{n-1}\partial_n,$$ which consists of locally nilpotent derivations on $K[x_1,\ldots ,x_n].$ This Lie algebra is locally nilpotent but not nilpotent, its structure and properties were studied in~\cite{Bavula}. We consider its embedding in $W_n(K)$ and prove that $u_n(K)$ is a maximal locally nilpotent subalgebra of $W_n(K)$ (Theorem~1). Another maximality property of $u_n(K)$ was considered in~\cite{Skutin}, where it was proved that $u_n(K)$ is a maximal subalgebra contained in the set of locally nilpotent derivations on $P_n$ (note that this set is not a Lie subalgebra of $W_n(K)$).

In~\cite{MP1}, it was proved that the derived length of solvable subalgebras in $W_n(K)$ does not exceed $2n$. The known example of solvable subalgebras that reaches this bound was pointed out in~\cite{Martello}, this is the subalgebra
$$s_n(K)=(P_0+x_1P_0)\partial_1+\dots+(P_{n-1}+x_nP_{n-1})\partial_n.$$
It is clear that the subalgebra $u_n(K)$ is properly contained in $s_n(K)$.
The subalgebra $s_n(K)$  has also a maximality property: we prove that $s_n(K)$ is a maximal solvable subalgebra of $W_n(K)$ (Theorem 2). Note that $s_n(K)$ appears in a natural way while studying Lie algebras of vector fields on $\mathbb{C}^n$ (see~\cite{Martello}). In general, maximal subalgebras of the Lie algebra $W_n(K)$ are not described, but some types of such subalgebras are known (see, for example,~\cite{Amemiya}). Note that  the structure of maximal subalgebras of semisimple Lie algebras was described in~\cite{Dynkin}.

We use standard notations. Recall that a derivation  $D \in W_n(K)$ is called locally nilpotent if for any $f \in P_n$ there exists a positive integer $k=k(f)$ such that $D^k(f)=0$. Let a derivation $D \in W_n(K)$ be written in the form $D = f_1\partial_1+\dots +f_n\partial_n$, $f_i \in P_n,$ $i=1,\dots, n.$ Then we say that $D$ has an index $k$ if $f_k \neq 0$ and $f_m= 0$ for all $m>k$. Let  $f=f(x_1,\ldots ,x_n)\in P_n$ be a polynomial. Then we say that $f$ has an index $s$ if $\frac{\partial f}{\partial x_s}\neq 0$ but $\frac{\partial f}{\partial x_i}=0$ for $i>s$. If~$D_1, \ D_2 \in W_n(K)$ then we write $[{D}_1^k, {D}_2]=\underbrace{[{D}_1,[ \ldots [{D}_1 }_{k},{D}_2]\ldots ]\ldots  ]$. We denote as usual by  $K^*$ the multiplicative group of the field $K$, that is $K^*=K\setminus{\{0\}}.$

\section{Maximality of $u_n(K)$}

We need some technical lemmas to prove Theorem~1, the main result of this section.
 Lemma~\ref{diff} seems to be known but having no exact references  we point out its proof for completeness.

\begin{lemma}[\cite{MP1}, Lemma 1]\label{commuting}
Let $D_{1}, D_{2}\in W_n(K)$  and $a, b\in K[x_1, \ldots , x_n].$ Then it holds:

(1)  $[aD_{1}, bD_{2}]=ab[ D_1, D_2]+aD_1(b)D_2-bD_2(a)D_{1}.$

(2) If $[D_{1}, D_{2}]=0,$  then  $[aD_{1}, bD_{2}]=aD_1(b)D_2-bD_2(a)D_{1}.$

\end{lemma}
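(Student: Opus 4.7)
The plan is to prove part (1) by a direct computation from the definition of the Lie bracket on derivations, and then obtain part (2) as an immediate specialization.

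First I would fix an arbitrary $f \in P_n$ and expand $(aD_1)(bD_2)(f)$ using the Leibniz rule. Since $aD_1$ is the derivation sending $g \mapsto a D_1(g)$, this gives
$$(aD_1)(bD_2)(f) = a\,D_1\bigl(b D_2(f)\bigr) = a\,D_1(b)\,D_2(f) + a b\, D_1(D_2(f)).$$
Symmetrically,
$$(bD_2)(aD_1)(f) = b\,D_2(a)\,D_1(f) + a b\, D_2(D_1(f)).$$
Subtracting and noting that $ab\bigl(D_1 D_2 - D_2 D_1\bigr)(f) = ab\,[D_1, D_2](f)$ yields
$$[aD_1, bD_2](f) = ab\,[D_1, D_2](f) + a D_1(b) D_2(f) - b D_2(a) D_1(f),$$
which, holding for all $f \in P_n$, gives the desired identity of derivations in part (1).

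For part (2), substituting $[D_1, D_2] = 0$ into the formula just proved makes the first term vanish, leaving $[aD_1, bD_2] = a D_1(b) D_2 - b D_2(a) D_1$.

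There is no real obstacle here; the only point requiring care is keeping track of the order of application, since $D_1$ and $D_2$ do not in general commute and the symbols $a, b$ must be left on the outside where they multiply polynomials (rather than being differentiated) in each expansion. Once the Leibniz rule is applied correctly to each term, the formula assembles itself.
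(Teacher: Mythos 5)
Your computation is correct and complete: expanding $(aD_1)(bD_2)(f)$ and $(bD_2)(aD_1)(f)$ via the Leibniz rule and subtracting gives exactly the identity in (1), with (2) as an immediate specialization. The paper itself offers no proof of this lemma (it is quoted from~\cite{MP1}), and your direct verification is the standard argument behind the cited result, so there is nothing to add.
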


\begin{lemma}\label{diff}

Let $f \in P_n = K[x_1, \ldots, x_n]$, $\deg f\geq 1$. Then:

(1) there exist nonnegative integers $\alpha_1,\ldots ,\alpha_n$ such that $\partial_1^{\alpha_1}\dots \partial_n^{\alpha_n}(f)$ is a nonzero constant;

(2) if $\deg _{x_i}f \geq 1$, then there exist nonnegative integers $\beta_1,\ldots,\beta_n$ (depending on $i$) such that $\partial_1^{\beta_1}\dots\partial_n^{\beta_n}(f)=\lambda_i x_i+g_i(x_1, \ldots , x_{i-1}, x_{i+1}, \ldots ,x_n), $ where $ \lambda_i \in K^*$.

\end{lemma}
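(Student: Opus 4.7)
The plan is to prove both parts by picking a carefully chosen monomial/term of $f$ and then killing everything else by successive partial differentiations, using characteristic zero to ensure that the surviving coefficients are nonzero.

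For part (1), I would pick any monomial $c\, x_1^{a_1}\cdots x_n^{a_n}$ of $f$ of maximal total degree $d=\deg f\ge 1$, with $c\ne 0$, and apply the operator $\partial_1^{a_1}\cdots \partial_n^{a_n}$, whose total order equals $d$. Every monomial $x_1^{b_1}\cdots x_n^{b_n}$ of $f$ has $\sum b_j\le d$, and the operator sends it to zero unless $b_j\ge a_j$ for every $j$; combined with $\sum b_j\le \sum a_j$ this forces $b_j=a_j$ for all $j$. Hence only the chosen monomial survives, contributing $c\cdot a_1!\cdots a_n!$, which is a nonzero constant since $\operatorname{char}K=0$.

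For part (2), I would view $f$ as a polynomial in $x_i$ with coefficients in $R:=K[x_1,\ldots,\widehat{x_i},\ldots,x_n]$ and write
\[
f=\sum_{k=0}^{m} x_i^{k}\, h_k,\qquad h_k\in R,\qquad m=\deg_{x_i}f\ge 1,\ h_m\ne 0.
\]
Applying $\partial_i^{\,m-1}$ kills all terms with $k<m-1$, turns the term $k=m-1$ into the element $(m-1)!\,h_{m-1}\in R$, and turns $x_i^{m}h_m$ into $m!\,x_i\,h_m$; thus
\[
\partial_i^{\,m-1}(f)=m!\,x_i\,h_m+H,\qquad H\in R.
\]
If $h_m\in K^*$ the statement follows directly. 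Otherwise $\deg h_m\ge 1$ as an element of $R$, and I would apply part (1) to $h_m$ (inside the polynomial ring $R$) to get nonnegative integers $\gamma_j$, $j\ne i$, with $\bigl(\prod_{j\ne i}\partial_j^{\gamma_j}\bigr)(h_m)=c\in K^*$. Crucially, each $\partial_j$ with $j\ne i$ commutes with multiplication by $x_i$ and preserves $R$, so applying this monomial of derivatives to the displayed identity gives
\[
\Bigl(\prod_{j\ne i}\partial_j^{\gamma_j}\Bigr)\partial_i^{\,m-1}(f)=m!\,c\,x_i+g_i,\qquad g_i\in R,
\]
which is exactly the required form with $\lambda_i=m!\,c\in K^*$, $\beta_i=m-1$, and $\beta_j=\gamma_j$ for $j\ne i$.

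There is really no serious obstacle: part (1) is a direct count of monomials, and part (2) is a bookkeeping argument that uses (1) applied inside a smaller polynomial ring. The only points requiring care are ensuring that the derivatives $\partial_j$ for $j\ne i$ preserve the decomposition in $x_i$ (so that $H$ stays in $R$ after further differentiation) and that the scalar factors such as $m!$ and $a_1!\cdots a_n!$ are invertible, which is where $\operatorname{char}K=0$ enters.
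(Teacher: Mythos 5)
Your proposal is correct and follows essentially the same route as the paper: part (1) by selecting a top-degree monomial and checking that the corresponding operator $\partial_1^{a_1}\cdots\partial_n^{a_n}$ kills every other monomial, and part (2) by expanding $f$ in powers of $x_i$, applying $\partial_i^{\,m-1}$, and then invoking part (1) for the leading coefficient inside $K[x_1,\ldots,\widehat{x_i},\ldots,x_n]$. The only (cosmetic) difference is that you explicitly dispose of the case where the leading coefficient is a nonzero constant, which the paper leaves implicit.
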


\begin{proof}

(1)  Let $ d=\deg f$ and $f=f_0+\cdots +f_d$ be the sum of homogeneous components of $f$. Choose any monomial $ax_1^{\alpha _1}\dots x_n^{\alpha _n}$ of the homogeneous polynomial $f_d$. Then $\partial_1^{\alpha_1}\dots \partial_i^{\alpha _n}(a x_1^{\alpha_1}\dots x_n^{\alpha_n})=\gamma$ for some $\gamma \in K^*$. If there exists another  monomial $bx_1^{\beta_1}\dots x_n^{\beta_n}$ of the polynomial $f_d$ then the polynomial $\partial_1^{\alpha_1}\dots \partial_i^{\alpha _n}(bx_1^{\beta_1}\dots x_n^{\beta_n})$ is a constant. This constant is nonzero only in the  case  when $\beta _1=\alpha _1, \ldots , \beta _n=\alpha _n.$ The latter is impossible because of the choice of the monomial $bx_1^{\beta_1}\cdots x_n^{\beta_n}$. So $\partial_1^{\alpha_1}\dots \partial_i^{\alpha _n}(f_d)=\gamma,$ and since the equalities $\partial_1^{\alpha_1}\dots \partial_i^{\alpha _n}(f_i)=0$ hold for all $i<d,$ we get $\partial_1^{\alpha_1}\dots \partial_i^{\alpha _n}(f)=\gamma, \ \gamma \in K^*.$

(2) Let $\deg _{x_{i}} (f)=d\geq 1.$ Expand $f$ in powers of $x_i:$ $f=h_0+h_1x_i+\cdots + h_dx_i^d$, where $  \deg _{x_{i}} (h_j)=0, j=1, \ldots ,d.$  Then
$$\partial _i^{d-1}(f)=t_0(x_1, \ldots x_{i-1}, x_{i+1}, \ldots , x_n)+t_1(x_1, \ldots x_{i-1}, x_{i+1}, \ldots , x_n)x_i.$$
 If the polynomial $t_1( x_1, \ldots x_{i-1}, x_{i+1}, \ldots , x_n)$ is nonconstant then by the first part of this lemma there exist nonnegative integers $\beta _1, \ldots \beta _{i-1}, \beta _{i+1}, \ldots ,\beta _n$ such that $$\partial_1^{\beta_1}\dots \partial _{i-1}^{\beta _{i-1}} \partial_{i+1}^{\beta _{i+1}}\dots\partial_{n}^{\beta _{n}}(t_1)=\lambda _i, \ \lambda _i \in K^*.$$ Denoting  $\beta _i=d-1$ we get $$\partial_1^{\beta_1}\dots \partial _{n}^{\beta _{n}}(f)=\lambda _ix_i+g_i(x_1, \ldots x_{i-1},x_{i+1}, \ldots ,x_n),$$ where  $g_i= \partial_1^{\beta_1}\dots \partial _{i-1}^{\beta _{i-1}} \partial_{i+1}^{\beta _{i+1}}\dots \partial_{n}^{\beta _{n}}(t_0), $  $\lambda _i\in K^*.$

\end{proof}

\begin{lemma}\label{lem2}

If there exists a locally nilpotent subalgebra $S$ of the Lie algebra $W_n(K)$ that properly contains $u_n(K)$, then there exists a (nonzero) linear derivation $D \in S \setminus u_n(K)$ of the form
$D = \sum_{i, j = 1}^n \lambda_{ij}x_j\partial_i$, where $\lambda_{ij} = 0$ for all $i>j$.
\end{lemma}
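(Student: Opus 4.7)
The plan is to start from an arbitrary $D \in S\setminus u_n(K)$ and reduce it, by iterated commutation with the partial derivatives $\partial_1,\ldots,\partial_n\in u_n(K)\subset S$, to an element whose ``non-$u_n(K)$ part'' is forced to be linear and upper triangular. The key preliminary observation is that for $D=f_1\partial_1+\cdots+f_n\partial_n\in S$ and any multi-index $\alpha=(\alpha_1,\ldots,\alpha_n)\in\NN^n$, the iterated commutator
\[
D_\alpha:=[\partial_1^{\alpha_1},[\partial_2^{\alpha_2},\ldots,[\partial_n^{\alpha_n},D]\ldots]]=\sum_{i=1}^n\bigl(\partial_1^{\alpha_1}\cdots\partial_n^{\alpha_n}f_i\bigr)\partial_i
\]
lies in $S$ (the order of bracketing is irrelevant because the $\partial_j$ commute). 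Since the $f_i$ are polynomials, $D_\alpha=0$ whenever $|\alpha|$ is large enough, so the set $M:=\{\alpha\in\NN^n:D_\alpha\notin u_n(K)\}$ is finite and contains $0$.

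Next I would pick $\alpha\in M$ maximizing $|\alpha|$ and set $D':=D_\alpha=\sum f_i'\partial_i\in S\setminus u_n(K)$. By maximality, $[\partial_j,D']=D_{\alpha+e_j}\in u_n(K)$ for every $j$; equivalently, $\partial_j f_i'\in P_{i-1}$ for all $i,j$. The technical heart of the proof is to extract from this simultaneous condition the structure of $f_i'$: a monomial-by-monomial analysis (applying the condition for indices $k\ge i$ to control the high-variable part of $f_i'$, and the condition for $j<i$ to kill the remaining cross terms) forces every monomial of $f_i'$ that involves some $x_k$ with $k\ge i$ to be a constant multiple of $x_k$. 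Hence
\[
f_i'=g_i(x_1,\ldots,x_{i-1})+\sum_{k\ge i}c_{ik}x_k,\qquad g_i\in P_{i-1},\ c_{ik}\in K.
\]

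Finally, $D_0:=\sum_i g_i\partial_i\in u_n(K)\subset S$, so
\[
\widetilde D:=D'-D_0=\sum_{1\le i\le k\le n}c_{ik}x_k\partial_i\in S
\]
is linear and of the required upper-triangular shape. Since $D'\notin u_n(K)$, at least one coefficient $f_i'$ is not in $P_{i-1}$, so the corresponding linear form $\sum_{k\ge i}c_{ik}x_k$ is nonzero; this shows $\widetilde D\ne 0$, and $\widetilde D\notin u_n(K)$ since otherwise $D'=D_0+\widetilde D\in u_n(K)$. Taking $\lambda_{ij}:=c_{ij}$ for $i\le j$ and $\lambda_{ij}:=0$ for $i>j$ gives the desired derivation.

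I expect the main obstacle to be precisely the monomial bookkeeping in the middle step — verifying that the simultaneous conditions $\partial_j f_i'\in P_{i-1}$ for all $j$ force $f_i'$ to be the sum of an element of $P_{i-1}$ and a $K$-linear combination of $x_i,\ldots,x_n$. The remaining ingredients (the commutator identity, the finiteness of $M$, and the final projection modulo $u_n(K)$) are essentially formal.
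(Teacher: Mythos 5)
Your proof is correct and follows essentially the same route as the paper: both arguments descend via brackets with the $\partial_j\in u_n(K)$ to a derivation $D'\in S\setminus u_n(K)$ satisfying $[\partial_j,D']\in u_n(K)$ for all $j$ (the paper selects a minimal-degree element of $S\setminus u_n(K)$, you select a maximal multi-index $\alpha$ with $D_\alpha\notin u_n(K)$ --- two phrasings of the same descent), and then extract $f_i'\in P_{i-1}+Kx_i+\cdots+Kx_n$ and subtract the $u_n(K)$-part. The ``monomial bookkeeping'' you flag as the main obstacle is exactly the paper's short two-step check (that $\deg_{x_j}f_i\le 1$ for $j\ge i$, and that the coefficient of $x_j$ is a constant), and your sketch of it closes without difficulty since $\partial_j$ sends distinct monomials to distinct monomials or to zero.
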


\begin{proof}
Suppose the statement of the lemma is false and the set $ S \setminus u_n({K})$ does not contain  any nonzero  linear derivation. Let us choose a derivation $D\in S \setminus u_n(K)$ of minimum degree and write it in the form $D=f_1\partial _1+\cdots +f_n\partial _n$, $f_i\in K[x_1, \ldots , x_n].$ Since $\partial _i \in S, i=1,\ldots ,n,$ we have  that
$$[\partial _i, D]=\partial _i(f_1)\partial _1+\cdots +\partial _i(f_n)\partial _n\in S $$ and $ \deg [\partial _i, D]<\deg D.$ By the choice of $D,$ we see that $[\partial _i, D]\in u_n(K), i=1, \ldots ,n.$ Let us show that
$\deg _{x_j}f_i\leq 1$ for $j\geq i.$
Indeed, if $\deg _{x_j}f_i\geq 2$, then  $\deg _{x_j}\partial _j(f_i)\geq 1$, which contradicts  the inclusion $[\partial _j, D]\in u_n(K)$ mentioned above.

 Write the polynomial $f_i$ in powers of $x_j\colon$  $ f_i=f_{i0}+f_{i1}x_j$ for some $f_{i0}, f_{i1}\in K[x_1, \ldots ,x_{j-1}, x_{j+1}, \ldots , x_n]$. If the polynomial $f_{i1}$ is nonconstant, then $\partial _k(f_{i1})\not =0$ for some $k, 1\leq k\leq n.$ But then $ \deg _{x_j}\partial _k(f_i)=1$ and therefore $[\partial _k, D]\not\in u_n(K).$
The latter contradicts the proven above. So, $f_i=f_{i0}+\lambda _{ij}x_j$ for some $\lambda _{ij}\in K.$ Repeating these considerations for every $j\geq i$ we see that $f_i$ can be chosen in the form $f_i=\sum_{i, j = 1}^n \lambda_{ij}x_j+\overline{f_{i}},$  where $\lambda_{ij} = 0$ for all $i>j$  and $\overline{f_{i}}$ does not depend on $x_j$, $j\geq i,$ i.e.,  $\overline{f_{i}} \in K[x_1, \ldots , x_{i-1}].$
But then $\overline{f_{i}}\partial _i\in u_n(K)$ and we can subtract $\overline{f_{i}}\partial _i$ from $D.$ Applying such considerations to all $f_i,$  we get the statement of the lemma.

\end{proof}

\begin{theorem}\label{th1}
The triangular subalgebra $u_n(K)$ is a maximal locally nilpotent subalgebra of  the Lie algebra  $W_n(K)$.
\end{theorem}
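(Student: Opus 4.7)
The plan is to argue by contradiction. Suppose $u_n(K)\subsetneq S$ with $S$ a locally nilpotent subalgebra of $W_n(K)$. Lemma~\ref{lem2} then produces a nonzero linear derivation $D=\sum_{i\leq j}\lambda_{ij}x_j\partial_i$ in $S\setminus u_n(K)$. My strategy is to work on the subspace $V=Kx_1+\cdots+Kx_n$ of linear forms, on which every linear derivation $D'\in W_n(K)$ acts as a linear endomorphism of $V$. If such a $D'$ has a nonzero eigenvalue $\mu$ with eigenvector $v\in V$, then $(D')^m(v)=\mu^m v\neq 0$ for every $m\geq 1$, so $D'$ cannot be locally nilpotent; to reach a contradiction it therefore suffices to produce an element of $S$ that has a nonzero eigenvalue on $V$.

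First I would use $D$ itself. Since $D(x_k)=\sum_{j\geq k}\lambda_{kj}x_j$, its matrix on $V$ is triangular with diagonal entries $\lambda_{11},\ldots,\lambda_{nn}$, which are therefore its eigenvalues, and for each of them an eigenvector in $V$ exists. Thus if some $\lambda_{kk}\neq 0$ the contradiction is immediate. Hence I may assume $\lambda_{kk}=0$ for all $k$. The coefficient of $\partial_i$ in $D$ equals $\sum_{j\geq i}\lambda_{ij}x_j$, and $D\notin u_n(K)$ forces this polynomial not to lie in $K[x_1,\ldots,x_{i-1}]$ for at least one $i$; combined with $\lambda_{ii}=0$ this yields a pair $i<j$ with $\lambda_{ij}\neq 0$.

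Fix such a pair and set $E=x_i\partial_j$. Since $i<j$, we have $x_i\in K[x_1,\ldots,x_{j-1}]$, so $E\in u_n(K)\subseteq S$. A direct calculation, using $\partial_j(x_i)=0$ together with $D(x_i)=\sum_{l\geq i}\lambda_{il}x_l$, gives $[E,D](x_i)=E(D(x_i))-D(E(x_i))=\lambda_{ij}x_i$, so $x_i$ is an eigenvector of $[E,D]\in S$ with nonzero eigenvalue $\lambda_{ij}$, and $[E,D]$ is not locally nilpotent — the required contradiction. I do not anticipate any real obstacle: Lemma~\ref{lem2} has done the heavy lifting by reducing everything to a linear upper-triangular derivation, and what remains is the short two-case eigenvalue argument above, the only small observation being the choice of the partner $E=x_i\partial_j\in u_n(K)$ to bracket against $D$.
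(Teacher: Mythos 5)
Your reduction via Lemma~\ref{lem2} and your bracket computations are correct, but the step where you declare the contradiction rests on the wrong notion of local nilpotency, and this gap is fatal to both branches of your argument. The hypothesis on $S$ is that it is a locally nilpotent \emph{Lie algebra} (every finitely generated subalgebra is nilpotent), not that its elements are locally nilpotent \emph{derivations} of $P_n$; the paper is explicit about this distinction in the introduction, where Theorem~\ref{th1} is contrasted with Skutin's result that $u_n(K)$ is maximal among subalgebras contained in the \emph{set} of locally nilpotent derivations (a set which is not a subalgebra of $W_n(K)$). Showing that $D$, or $[E,D]$, has a nonzero eigenvalue on $V=Kx_1+\cdots+Kx_n$, hence is not a locally nilpotent operator on $P_n$, yields no contradiction: a locally nilpotent, indeed abelian, subalgebra of $W_n(K)$ can perfectly well contain such elements --- for instance $KE_n$ with $E_n=x_1\p_1+\cdots+x_n\p_n$, or $Kx_1\p_1$, are one-dimensional (hence nilpotent) subalgebras whose nonzero elements all act on $V$ with nonzero eigenvalues. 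Both of your cases (``some $\lambda_{kk}\neq 0$, contradiction immediate'' and the off-diagonal case via $[E,D]$) invoke exactly this invalid inference, so the proof does not go through as written; the first case is in fact where the gap bites hardest, since there you use nothing but the false principle.

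The repair is to move the eigenvalue computation from $V$ into $S$ itself, i.e., to exhibit an $\ad$-eigenvector with nonzero eigenvalue, which is what the paper's proof does: if $[D_0,X]=\mu X$ with $\mu\neq 0$ and $0\neq X\in S$, then every term of the lower central series of the subalgebra generated by $D_0$ and $X$ contains a nonzero multiple of $X$, so this two-generated subalgebra is not nilpotent, contradicting local nilpotency of $S$. Concretely, in your off-diagonal case one more bracket with your own $E=x_i\p_j\in u_n(K)$ suffices: setting $D_0=[E,D]$ one gets $[D_0,E]=c\lambda_{ij}E$ for a nonzero constant $c$, so $E\in S$ is an eigenvector of $\ad D_0$ with nonzero eigenvalue. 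Your diagonal case needs genuine extra work along the paper's lines: for $D=\sum_i\mu_ix_i\p_i$ with $\mu_i\neq\mu_j$, $i<j$, one has $[D,x_i\p_j]=(\mu_i-\mu_j)x_i\p_j$, and in the remaining case $D=\mu E_n$ one uses $[D,x_1^2\p_2]=\mu x_1^2\p_2$, noting $x_i\p_j,\ x_1^2\p_2\in u_n(K)\subset S$. With these replacements your argument becomes essentially the paper's proof; without them it proves only that $S$ contains a derivation that is not locally nilpotent as an operator, which is a statement about Skutin's setting, not about Theorem~\ref{th1}.
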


\begin{proof}
Suppose to the contrary that $u_n(K)$ is properly contained in a locally nilpotent subalgebra $S$ of $W_n(K)$. By Lemma~\ref{lem2}, there exists an element $D \in S \setminus u_n(K)$ of the form
$$D = \sum_{i=1}^n \sum_{j=i}^n \lambda_{ij} x_j \partial_i,  \ \lambda_{ij} \in K,$$ i.e. $D = f_1 \partial_1 + \dots + f_n \partial_n $, where
$f_i = \lambda_{ii}x_i + \lambda_{i, i+1}x_{i+1} + \dots + \lambda_{in}x_n$.

Firstly, let us prove that every linear derivation $D\in S \setminus u_n(K)$ is diagonal, i.e. that the matrix $(\lambda_{ij})_{i,j=1}^n$ is diagonal. Let it be not the case and choose  any linear (non-diagonal) derivation $D = \sum_{i=1}^n \sum_{j=i}^n \lambda_{ij} x_j \partial_i, \ \lambda_{ij} \in K$ from the set $ S \setminus u_n(K)$. Note that the derivation $x_i \partial_j \in u_n(K)$ for $i<j$ and consider the product

\begin{multline}\label{eq2} [x_i\partial _j, D]=[x_i\partial_j, \sum_{s=1}^n \lambda _{1s}x_s \partial_1+\cdots +\sum_{s=k}^n \lambda _{ks}x_s \partial_k+ \cdots +\lambda _{nn}x_n\partial _n] = \\
= x_i \lambda_{1j} \partial_1 +\cdots + x_i\lambda_{ij}\partial _i + \dots + x_i\lambda_{jj}\partial_j
- ( \lambda _{1i}x_i+ \dots + \lambda_{ji}x_i)\partial_j.
\end{multline}

Denote this product $[x_i\partial _j, D]$ by $D_0$. Using (\ref{eq2}) one can  easily  see that $[D_0, x_i\partial _j]=\lambda_{ij}x_i\partial _j, $ i.e.
$x_i\partial _j$ is an eigenvector for the linear operator $\ad D_0$ with the eigenvalue $\lambda_{ij}.$ Since $D$ is non-diagonal (by our assumption) there exists a nonzero coefficient $\lambda _{ij}, i<j.$ The latter is impossible because the subalgebra $S$ is locally nilpotent. The obtained contradiction shows that all linear derivations from $S \setminus u_n(K)$ are diagonal.

Take any linear derivation $D\in S \setminus u_n(K)$,  $D = \mu_1 x_1 \partial_1 + \dots + \mu_n x_n \partial_n.$ Obviously, $  D \neq 0$. Let us show that $S \setminus u_n(K)$ contains a derivation $D_1 = \mu E_n$, where $E_n = x_1 \partial_1 + \dots + x_n \partial_n$ is the Euler derivation. If $D = \sum \mu_i x_i \partial_i$ is not proportional to $E_n$, then there exist $\mu_i, \ \mu_j,$ such that $\mu_i \neq \mu_j, \ i < j$. Then $S$ contains the product
$$[\sum_{i=1}^n \mu_i x_i \partial_i, x_i\partial_j] = (\mu_i - \mu_j)x_i \partial_j, \ \mu_i - \mu_j \neq 0.$$

The latter means that $x_i \partial_j$ is an eigenvector for the linear operator $\ad D$ with the (nonzero) eigenvalue $\mu_i - \mu_j$, which is impossible because $S$ is a locally nilpotent subalgebra of $W_n({K})$. Therefore, we have  that $D = \mu E_n$ for some $\mu \in K^*$. But $[\mu E_n, x_1^2\partial_2] = \mu x_1^2 \partial_2$ for the element $x^2_1 \partial_2 \in u_n({K}) \subset S$. The latter is impossible as it was mentioned above. The obtained contradiction shows   that $S = u_n({K})$ and $ u_n({K})$ is a maximal locally nilpotent subalgebra of the Lie algebra  $W_n({K})$.

\end{proof}

\section{Maximality of $s_n(K)$}

Recall that we denote by $P_i=K[x_1, \dots, x_i]$ the polynomial ring over $ K.$ We also denote for convenience $P_0= K.$ It is easy to see that the $ K$-subspace
$$s_n( K)=(P_0+x_1P_0)\p_1+\dots+(P_{n-1}+x_nP_{n-1})\p_n$$
is a subalgebra of $W_n( K)$ and $u_n( K)\subset s_n(K).$ This subalgebra is solvable of the derived length~$2n.$ Some properties of $s_n(K)$ were pointed out in~\cite{Martello}.
Since the derived length of solvable subalgebras of $W_n(K)$ does not exceed~$2n$ (see~\cite{MP1},~\cite{Martello}), the subalgebra $s_n(K)$ has the maximum possible derived length.

Here we prove that $s_n(K)$ is a maximal solvable subalgebra of $W_n(K)$  (Theorem~\ref{th2}).

Let $D\in W_n( K)$ be a derivation of the form $D=f_1\p_1+\dots+f_n\p_n,$ where $f_i \in P_n,$ $i=1,\ldots ,n.$ Recall that we say that  $D$ has an index $k$ if $f_k\neq 0$ and $f_m=0$ for all $m>k.$  We also say that a polynomial $f\in P_n$  has an index $s$ if $\frac{\p f}{\p x_s}\neq 0$ and $\frac{\p f}{\p x_i}=0$ for all $i>s.$

\begin{lemma}\label{sl2}
Let $T_1=\sum _{i=1}^{k-1}g_i\partial _i+\partial _k$, $T_2=\sum _{i=1}^{k-1}h_i\partial _i-x_k^2\partial _k$, $T_3=\sum _{i=1}^{k-1}f_i\partial _i-2x_k\partial _k$ be  derivations from $W_n(K)$ for some $k\leq n$. Then $T_1, T_2, T_3$ generate a non-solvable subalgebra of the Lie algebra $W_n(K)$.
\end{lemma}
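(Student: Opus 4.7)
The plan is to exhibit a Lie algebra homomorphism from a suitable subalgebra of $W_n(K)$ containing $T_1$, $T_2$, $T_3$ onto a copy of $\mathfrak{sl}_2(K)$ sitting inside the one-variable Witt algebra $K[y]\partial_y$. Since $\mathfrak{sl}_2(K)$ is simple and non-abelian, hence non-solvable, any Lie algebra that surjects onto it must itself be non-solvable, which is exactly what we need for the subalgebra generated by $T_1$, $T_2$, $T_3$.

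To set this up, I would first introduce the $K$-subspace
$$\mathcal{N}=\sum_{s<k} P_n\partial_s + K[x_k]\partial_k \subset W_n(K),$$
which visibly contains $T_1$, $T_2$, $T_3$, and check that $\mathcal{N}$ is in fact a Lie subalgebra. The only nontrivial point is that for $T=\sum_{s<k} r_s\partial_s+\phi(x_k)\partial_k$ and $T'=\sum_{s<k} r'_s\partial_s+\psi(x_k)\partial_k$ in $\mathcal{N}$, the $\partial_k$-coefficient of $[T,T']$ equals $T(\psi(x_k))-T'(\phi(x_k)) = \phi\psi'-\psi\phi'$, which is again a polynomial in $x_k$ alone; this uses crucially that $\partial_s$ annihilates $\phi(x_k)$ for every $s<k$. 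The $\partial_l$-coefficient of $[T,T']$ is automatically in $P_n$ for $l<k$ and zero for $l>k$.

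Next I would define $\sigma\colon \mathcal{N}\to K[y]\partial_y$ by $\sigma(T)=\phi(y)\partial_y$, where $\phi(x_k)$ is the $\partial_k$-coefficient of $T$. The calculation above shows that $\sigma$ is a Lie algebra homomorphism, because $[\phi(y)\partial_y,\psi(y)\partial_y]=(\phi\psi'-\psi\phi')\partial_y$ in $W_1(K)$. The images $\sigma(T_1)=\partial_y$, $\sigma(T_2)=-y^2\partial_y$, $\sigma(T_3)=-2y\partial_y$ form the classical $\mathfrak{sl}_2$-triple in $K[y]\partial_y$: direct brackets give $[\sigma(T_1),\sigma(T_2)]=\sigma(T_3)$, $[\sigma(T_3),\sigma(T_1)]=2\sigma(T_1)$, and $[\sigma(T_3),\sigma(T_2)]=-2\sigma(T_2)$. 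Hence the subalgebra $L$ of $W_n(K)$ generated by $T_1,T_2,T_3$ surjects under $\sigma$ onto $\mathfrak{sl}_2(K)$ and cannot be solvable.

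The main obstacle is spotting the right intermediate subalgebra $\mathcal{N}$ together with the map $\sigma$; once these are in place, everything reduces to the one-variable $\mathfrak{sl}_2$ calculation. The subtle invariant to verify is that the $\partial_k$-coefficient of any element built from $T_1,T_2,T_3$ by bracketing remains a polynomial in $x_k$ alone, which is precisely what makes $\sigma$ well-defined as a Lie algebra morphism and turns the problem into a trivial statement about a known $\mathfrak{sl}_2$-triple in the Witt algebra.
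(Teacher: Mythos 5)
Your proof is correct and follows essentially the same route as the paper: the paper likewise sends $T_1,T_2,T_3$ to the triple $\partial_k$, $-x_k^2\partial_k$, $-2x_k\partial_k$, which spans a copy of $\mathfrak{sl}_2(K)$ inside $K[x_k]\partial_k$, and concludes non-solvability of the generated subalgebra from this surjection. If anything, your intermediate subalgebra $\mathcal{N}$ and the coefficient-projection $\sigma$ make explicit the verification that the paper leaves implicit when it merely asserts that the map on generators ``can be extended to a homomorphism.''
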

\begin{proof}
Direct calculations show that
$[T_1, T_2]=\sum _{i=1}^{k-1}a_i\partial _i-2x_k\partial _k$, $[T_3, T_1]=\sum _{i=1}^{k-1}b_i\partial _i+2\partial _k$, $[T_3, T_2]=\sum _{i=1}^{k-1}c_i\partial _i+2x_k^2\partial _k$  for some polynomials $a_i, b_i, c_i\in K[x_1, \ldots , x_n] .$ Denote by $L$ the subalgebra of the Lie algebra $W_n(K)$ generated by the elements $T_1, T_2, T_3$ and by $L_1$ the subalgebra generated by $\partial _k,  -x_k^2\partial _k, -2x_k\partial _k.$ Define a map   $\varphi $ from the set $\{ T_1, T_2, T_3\}$ onto the set $\{    \partial _k,  -x_k^2\partial _k, -2x_k\partial _k \}$ by the rule:
$$ \varphi (T_1)= \partial _k,  \ \varphi (T_2)= -x_k^2\partial _k,  \  \varphi (T_3)=-2x_k\partial _k.$$
  The rule of commutation of generators  $T_1, T_2, T_3$ shows that $\varphi$ can be  extended to a homomorphism of the Lie algebra  $L$ onto $L_1$. Since $L_1$ is isomorphic to $sl_2(K)$  we conclude that $L$ is non-solvable.
\end{proof}
\begin{theorem}\label{th2}
	The subalgebra $s_n( K)$ is a maximal solvable subalgebra of the Lie algebra  $W_n( K).$
\end{theorem}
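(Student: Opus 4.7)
The plan is to suppose for contradiction that $S \supsetneq s_n(K)$ is a solvable subalgebra of $W_n(K)$, and to manufacture inside $S$ a triple $T_1, T_2, T_3$ meeting the hypotheses of Lemma~\ref{sl2}; by that lemma they generate a non-solvable subalgebra of $S$, the required contradiction. Since $\p_l \in s_n(K)$ and $-2x_l\p_l \in s_n(K)$ for every $l \leq n$, the essential task is to exhibit, for some $l$, a derivation $T_2 \in S$ of index $l$ whose $l$-th component equals $-x_l^2$.

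First I would pick $D \in S \setminus s_n(K)$ of minimum index $k$ and write $D = f_1\p_1 + \cdots + f_k\p_k$ with $f_k \neq 0$. Minimality of $k$ forces $f_k \notin P_{k-1} + x_k P_{k-1}$, since otherwise $f_k\p_k \in s_n(K)$ and $D - f_k\p_k$ would lie in $S \setminus s_n(K)$ with smaller index. Two cases then arise. In Case~1, where $f_k \in K[x_1,\ldots,x_k]$, we necessarily have $\deg_{x_k} f_k \geq 2$; set $l := k$ and $\tilde D := D$. In Case~2, let $l > k$ be the largest index with $\p_l(f_k) \neq 0$; since $x_k \in P_{l-1}$, the derivation $x_k x_l\p_l$ lies in $s_n(K)$, so $\tilde D := [D, x_k x_l\p_l]$ belongs to $S$. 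A direct computation via Lemma~\ref{commuting} shows that $\tilde D$ has index $l$ with $l$-th component $f_k x_l \in K[x_1,\ldots,x_l]$ of $x_l$-degree $\deg_{x_l}(f_k) + 1 \geq 2$.

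In both cases $\tilde D \in S$ has index $l$ with $l$-th component $\phi_l = \sum_{j=0}^d h_j(x_1,\ldots,x_{l-1})x_l^j$ where $d \geq 2$ and $h_d \neq 0$. The iterated bracket $[\p_l^{d-2},\tilde D]$ lies in $S$ and has $l$-th component $\frac{d!}{2}h_d x_l^2 + (d-1)!\,h_{d-1}x_l + (d-2)!\,h_{d-2}$. Applying Lemma~\ref{diff}(1) to $h_d$, choose nonnegative integers $\alpha_1,\ldots,\alpha_{l-1}$ with $\p_1^{\alpha_1}\cdots\p_{l-1}^{\alpha_{l-1}}(h_d) = \gamma \in K^*$; iteratively bracketing $[\p_l^{d-2},\tilde D]$ with the $\p_i$ (each of which lies in $s_n(K) \subset S$) produces a derivation in $S$ whose $l$-th component has the form $cx_l^2 + p(x_1,\ldots,x_{l-1})x_l + q(x_1,\ldots,x_{l-1})$ with $c \in K^*$. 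Since $px_l + q \in P_{l-1} + x_lP_{l-1}$, the derivation $(px_l + q)\p_l$ lies in $s_n(K)$, and subtracting it yields $\widehat D \in S$ of index $l$ whose $l$-th component is exactly $cx_l^2$. Taking $T_1 := \p_l$, $T_3 := -2x_l\p_l$, and $T_2 := -c^{-1}\widehat D$ produces the desired triple, and Lemma~\ref{sl2} delivers the contradiction. The principal obstacle is Case~2, where the index-$k$ component is entangled with variables above $k$; the key trick is that bracketing with $x_k x_l\p_l \in s_n(K)$ both transfers the obstruction into the $l$-th slot and raises the $x_l$-degree to at least two, thereby reducing Case~2 to Case~1.
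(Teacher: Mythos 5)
Your proof is correct, and although it rests on the same two pillars as the paper's argument --- the non-solvability criterion of Lemma~\ref{sl2} and the normalization of leading coefficients via Lemma~\ref{diff} together with brackets by elements of $u_n(K)\subset s_n(K)$ --- it routes around the hardest part of the paper's proof in a genuinely different way. The paper performs a double minimization: after fixing the minimal index $k$ of derivations in $S\setminus s_n(K)$, it also minimizes the index $s$ of the coefficient $f_k$, and in the case $s>k$ it needs a descent step (bracketing with $x_{s-1}\partial_s$ to produce a coefficient of smaller index, contradicting minimality of $s$) plus a separate endgame for the boundary subcase $s=k+1$ built on the identity $[x_k^2\partial_{k+1},x_{k+1}\partial_k]=x_k^2\partial_k-2x_kx_{k+1}\partial_{k+1}$; its $sl_2$-type triple is always created at index $k$. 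You dispense with the secondary minimization entirely: the single bracket $\tilde D=[D,x_kx_l\partial_l]$, with $x_kx_l\partial_l\in x_lP_{l-1}\partial_l\subset s_n(K)$ since $k\le l-1$, is computed correctly (by Lemma~\ref{commuting}(2) the $i=k$ term contributes $f_kx_l\partial_l-x_kx_l\partial_l(f_k)\partial_k$ and all other contributions land on $\partial_i$ with $i<k$), and it converts any derivation whose top coefficient depends on a variable $x_l$ with $l>k$ into one of index $l$ whose $l$-th coefficient $f_kx_l\in P_l$ has $x_l$-degree at least $2$ --- exactly the shape of the paper's ``easy'' case $s=k$, now transplanted to index $l$. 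This buys a uniform endgame with no subcases and no descent, and it exploits an observation the paper uses only implicitly: the triple $T_1,T_2,T_3$ need not lie outside $s_n(K)$, it only needs to lie inside $S$, so you never have to track membership in $S\setminus s_n(K)$ after the first step. Two cosmetic points for a final write-up: when the leading coefficient $h_d$ is already a nonzero constant, Lemma~\ref{diff}(1) does not apply as stated (it assumes $\deg h_d\ge 1$) and you should take the empty differential operator; and in deducing $f_k\notin P_{k-1}+x_kP_{k-1}$ from minimality of $k$, note that $D-f_k\partial_k\neq 0$ because $D\notin s_n(K)$ while $f_k\partial_k\in s_n(K)$, so the smaller index is indeed defined.
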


\begin{proof}
	Suppose to the contrary that there exists a solvable subalgebra $S\subset W_n( K)$ such that $s_n(K)$ is properly contained in $S.$
	 Denote by $k$  the smallest index of derivations from the set  $S\setminus s_n( K)$ and  consider the set $\mathfrak D_k$ of all derivations $D\in S\setminus s_n( K)$ that have the index $k.$ Let us choose a derivation $D\in \mathfrak D_k$  in such a way that its (nonzero)  polynomial coefficient $f_k$ (by the partial derivative $\partial _k$) has the smallest index $s.$ Then we have
  $$D=f_1\partial _1+\dots +f_k\partial _k,$$ where $f_i\in P_n, \ i=1, \ldots,k,$
 $\frac {\p f_k}{\p x_s}\not =0$
 and $\frac {\p f_k}{\p x_i}=0$ for all $i>s.$
	
	Firstly, let us show that $s\geq k$ and if $s=k$ then $\deg_{x_s}f_k\geq 2.$ 	
	Indeed, if $s<k$ then $f_k\p_k\in u_n( K),$ since $f_k\in P_{s}$ and $s<k.$   By our assumption, $u_n( K)\subset S.$ Then $D-f_k\p_k \in S\setminus s_n( K)$ and  this derivation has an index less than $k,$ which contradicts  our choice of $D.$ Therefore, $s\geq k.$ Let  $s=k$. Then $x_s=x_k,$ $\frac{\p f_k}{\p x_k}\neq 0$ and $f_k\in P_k.$ Let us expand the polynomial $f_k$ in powers of~$x_k:$
	$$f_k(x_1,\dots, x_k)=h_0(x_1, \dots, x_{k-1})+h_1(x_1, \dots, x_{k-1})x_k+\dots +h_t(x_1, \dots, x_{k-1})x_k^t$$
	for some $t\geq 1$ and polynomials $h_i\in P_{k-1},$ $i=1\ldots, t.$
	If $t=1$ then $$f_k\p_k=(h_0(x_1, \dots, x_{k-1})+h_1(x_1, \dots, x_{k-1})x_k)\p_k\in s_n( K).$$ It is obvious that   $D-f_k\p_k\in S\setminus s_n( K)$ and the derivation $  D-f_k\p_k$ has the index less than $k.$ The latter contradicts  the choice of $D$. Thus, if  $s=k$ then  $\deg_{x_k}f_k\geq 2.$
	
	Therefore, we can write $D\in \mathfrak D_k$ in the form
	\begin{equation}\label{th3_eq1}
	D=f_1\p_1+\dots+f_k\p_k,
	\end{equation} where $\deg_{x_s}f_k\neq 0,$ $\deg_{x_i}f_k=0$ for all $i>s$ and $s\geq k.$
	
	We investigate the possible two cases: $s>k$ and $s=k.$
	
	\underline{\it Case 1.} Let us begin with the case $s>k.$
	
	Let us  expand the polynomial $f_k\in P_{s}$  from the derivation $D$ (written in the form  ~(\ref{th3_eq1})) in  powers of~$x_s:$
	$$f_k(x_1, \dots, x_s)=g_0(x_1, \dots, x_{s-1})+g_1(x_1, \dots, x_{s-1})x_s+\dots+g_l(x_1, \dots, x_{s-1})x_s^l,$$ for some $l\geq 1$,
where $g_i\in P_{s-1}, \ g_l\neq 0.$
	Then the  product $D_0=[\underbrace{\p_s, \dots, [\p_s,}_{l-1~\text{times}} D]\dots ]\in S$ can be written in the form $$D_0=\alpha_1 \p_1+\dots +\alpha_{k-1}\p_{k-1}+(u_0(x_1, \dots, x_{s-1})+u_1(x_1, \dots, x_{s-1})x_s)\p_k,$$ for some  $\alpha_i\in P_n, \ i=1,\ldots ,k-1,$ $u_0, u_1\in P_{s-1}$ and $u_1\neq 0$ by the choice of $s.$
	
	By Lemma~\ref{diff},  there exists a differential operator $\p_1^{\beta_1}\dots\p_{s-1}^{\beta_{s-1}},$  $\beta_i\geq 0$ such that $$\p_1^{\beta_1}\dots\p_{s-1}^{\beta_{s-1}}(u_1)=\lambda\in \ K^*,$$
	where $ K^*$ is the  group of units of the field $K.$ Applying this operator to the derivation $D_0$ we obtain a derivation $D_1$ of the form
 $$D_1=\gamma_1 \p_1+\dots +\gamma_{k-1}\p_{k-1}+(v_0(x_1, \dots, x_{s-1})+\lambda x_s)\p_k,$$
	for some  $\gamma_i\in P_n, \ i=1, \ldots ,k-1,$ $v_0 \in P_{s-1}$ and $\lambda \neq 0.$ The derivation $D_1\in S\setminus s_n( K),$ since $\lambda\neq 0$ and $s>k.$
	
	Consider the subcase $s-1>k.$ Then it holds
	\begin{multline}\label{th3_eq2}
	[x_{s-1}\p_s; (v_0(x_1, \dots, x_{s-1})+\lambda x_s)\p_k]=\\ =x_{s-1}\p_s(v_0(x_1, \dots, x_{s-1})+\lambda x_s)\p_k-(v_0(x_1, \dots, x_{s-1})+\lambda x_s)\p_k(x_{s-1})\p_s=\lambda x_{s-1}\p_k,
	\end{multline}
	since  $\p_k(x_{s-1})=0$ for $s-1>k.$ Moreover,  for all $i=1, \ldots ,k-1$ we get
	\begin{equation}\label{th3_eq3}
	[x_{s-1}\p_s; \gamma_i \p_i]=x_{s-1}\p_s(\gamma_i)\p_i - \gamma_i \p_i(x_{s-1})\p_s=x_{s-1}\p_s(\gamma_i)\p_i.
	\end{equation}
	Taking into account the relations~(\ref{th3_eq2}) and (\ref{th3_eq3}), we obtain $$[x_{s-1}\p_s; D_1]=\sum_{i=1}^{k-1} x_{s-1}\p_s(\gamma_i)\p_i+ \lambda x_{s-1}\p_k.$$
	Since $s-1>k$ we have that $[x_{s-1}\p_s; D_1]\not \in s_n(K).$
	Therefore, $[x_{s-1}\p_s; D_1]\in S\setminus s_n(K),$ and we get a contradiction, since the polynomial $\lambda x_{s-1}$ has the index less than $s.$
	
	Now let  $s-1=k.$ Then  $\p_k(x_{s})=0,$ so we get
	\begin{multline}\label{th3_eq4}
	[x_{s}\p_s; (v_0(x_1, \dots, x_{s-1})+\lambda x_s)\p_k]=\\ =x_{s}\p_s(v_0(x_1, \dots, x_{s-1})+\lambda x_s)\p_k-(v_0(x_1, \dots, x_{s-1})+\lambda x_s)\p_k(x_{s})\p_s=\lambda x_{s}\p_k.
	\end{multline}
	Since $\p_i(x_s)=0$ for all $i=1, \ldots , k-1$  one can easily show (using the relation~(\ref{th3_eq4})) that
	$$ [(1/{\lambda })x_{s}\p_s; D_1]=(1/{\lambda })\sum_{i=1}^{k-1} x_{s}\p_s(\gamma_i)\p_i+ x_{s}\p_k.$$
	Denote $D_2=(1/{\lambda })\sum_{i=1}^{k-1} x_{s}\p_s(\gamma_i)\p_i+ x_{s}\p_k.$  It is obvious that $D_2\in S\setminus s_n(K)$ because $s=k+1.$  Note that $x_k^2\partial _{k+1}\in s_n(K)$ and therefore
$$[x_k^2\partial _{k+1}, D_2]=[x_k^2\partial _{k+1}, (1/{\lambda })\sum _{i=1}^{k-1}x_s\partial _s(\gamma _i)\partial _i+x_{k+1}\partial _k]=\sum _{i=1}^{k-1}\alpha _i\partial _i+[x_k^2\partial _{k+1}, x_{k+1}\partial _k]$$
  for some $\alpha _i\in K[x_1, \ldots ,x_n]$ is an element of subalgebra $S.$

But $[x_k^2\partial _{
k+1},  x_{k+1}\partial _k]=x_k^{2}\partial _k-2x_kx_{k+1}\partial _{k+1}.$  Therefore
$$[x_k^2\partial _{k+1}, D_2]=\sum _{i=1}^{k-1}\alpha _i\partial _i+x_k^{2}\partial _k-2x_kx_{k+1}\partial _{k+1}.$$
But $2x_kx_{k+1}\partial _{k+1}\in s_n(K)$ and therefore $\sum _{i=1}^{k-1}\alpha _i\partial _i+x_k^{2}\partial _k\in S\setminus s_n(K).$ Besides, $\partial _k, x_k\partial _k\in s_n(K)$ and denoting
$$T_1=\partial _k, T_2=-\sum _{i=1}^{k-1}\alpha _i\partial _i-x_k^{2}\partial _k, T_3=-2x_k\partial _k$$
 we see by Lemma~\ref{sl2} that $S$ is  non-solvable. The latter contradicts  the choice of $S$ and this contradiction shows that the case $s>k$ is impossible.

	\underline{\it Case 2.}  Now let us consider the case $s=k.$  As shown above, in this case
	$\deg_{x_s}f_k=\deg_{x_k}f_k\geq 2.$
	Then the chosen derivation $D\in \mathcal D_k$ is of the form $$D=f_1\p_1+\dots+f_k\p_k,$$ where $\deg_{x_k}f_k\geq 2,$ $\deg_{x_i}f_k=0$ for all $i>k.$  Let us expand the polynomial $f_k\in P_{k}$ by powers of $x_k:$
	$$f_k(x_1, \dots, x_k)=g_0(x_1, \dots, x_{k-1})+g_1(x_1, \dots, x_{k-1})x_k+\dots+g_l(x_1, \dots, x_{k-1})x_k^l,$$ where $g_l\neq 0, \ l\geq 2, \ g_i\in P_{k-1}, i=1,\ldots ,l.$
	
	As in the previous case, let us consider the derivation
	\begin{multline*}D_0=[\underbrace{\p_k, \dots, [\p_k,}_{l-2~\text{times}} D]\dots ]=\alpha_1 \p_1+\dots +\alpha_{k-1}\p_{k-1}+\\+(u_0(x_1, \dots, x_{k-1})+u_1(x_1, \dots, x_{k-1})x_k+u_2(x_1, \dots, x_{k-1})x_k^2)\p_k \in S,
	\end{multline*}
	where $\alpha_i\in P_n, \ i=1, \ldots k-1,$ $u_0, u_1, u_2\in P_{k-1}$ and $u_2\neq 0.$
	Using a differential operator $\p_1^{\gamma_1}\dots\p_{k-1}^{\gamma_{k-1}}$ with appropriate $\gamma_i\geq 0, \ i=1,\ldots ,k-1,$ we can assume without loss of generality that $u_2(x_1, \dots, x_{k-1})=\lambda_k \in K^*$. We obtain the  derivation $$D_1=\mu_1 \p_1+\dots +\mu_{k-1}\p_{k-1}+(v_0(x_1, \dots, x_{k-1})+v_1(x_1, \dots, x_{k-1})x_k+\lambda_kx_k^2)\p_k \in S\setminus s_n(K),$$ where $\mu_i\in P_n, \ i=1,\ldots ,k-1,$ and $v_0, \ v_1\in P_{k-1}.$ Since $$(v_0(x_1, \dots, x_{k-1})+v_1(x_1, \dots, x_{k-1})x_k)\p_k\in s_n(K),$$ we have  the derivation $$D_2=\mu_1 \p_1+\dots +\mu_{k-1}\p_{k-1}+\lambda_kx_k^2\p_k\in S\setminus s_n(K).$$
	Denote by $L_1$ the subalgebra of $S$ generated by $T_1=\partial _k, T_2=-D_2, $ and $T_3=-2x_k\partial _k$. By Lemma~\ref{sl2} the subalgebra $L_1$ is non-solvable which contradicts to its choice. The obtained contradiction shows that  case $s=k$ is also impossible. Therefore our assumption about $S$ is false and $s_n(K)$ is a maximal solvable subalgebra of the Lie algebra $W_n(K).$
	\end{proof}

%%%%%%%%%%%%%%%%%%%%%%%%%%%%%%%%%%%%%%%%%%

%
\end{document}